
\documentclass[11pt,reqno,tbtags,a4paper]{amsart}
\usepackage{amssymb}
\usepackage{xpunctuate}
\usepackage{url}
\usepackage[square,numbers]{natbib}
\bibpunct[, ]{[}{]}{;}{n}{,}{,}

\title
{On general subtrees of a conditioned Galton--Watson tree}

\date{9 November, 2020}

\author{Svante Janson}
\thanks{Supported by the Knut and Alice Wallenberg Foundation}
\address{Department of Mathematics, Uppsala University, PO Box 480,
SE-751~06 Uppsala, Sweden}
\email{svante.janson@math.uu.se}
\newcommand\urladdrx[1]{{\urladdr{\def~{{\tiny$\sim$}}#1}}}
\urladdrx{http://www2.math.uu.se/~svante/}


\overfullrule 0pt 


\numberwithin{equation}{section}

\renewcommand\le{\leqslant}
\renewcommand\ge{\geqslant}

\allowdisplaybreaks

 \setlength{\textheight}{22.3cm} 
 \setlength{\textwidth}{385pt} 






\theoremstyle{plain}
\newtheorem{theorem}{Theorem}[section]
\newtheorem{lemma}[theorem]{Lemma}
\newtheorem{proposition}[theorem]{Proposition}

\theoremstyle{definition}

\newtheorem{exampleqqq}[theorem]{Example}
\newenvironment{example}{\begin{exampleqqq}}
  {\hfill\qedsymbol\end{exampleqqq}}

\newtheorem{remarkqqq}[theorem]{Remark}
\newenvironment{remark}{\begin{remarkqqq}}
  {\hfill\qedsymbol\end{remarkqqq}}



\theoremstyle{remark}

\newenvironment{romenumerate}[1][-10pt]{
\addtolength{\leftmargini}{#1}\begin{enumerate}
 }{\end{enumerate}}

\newcounter{oldenumi}
{\setcounter{oldenumi}{\value{enumi}}
\begin{romenumerate} \setcounter{enumi}{\value{oldenumi}}}
{\end{romenumerate}}

\newcounter{thmenumerate}

\newcounter{xenumerate}   


\newcommand\pfitem[1]{\par(#1):}
\newcommand\pfitemx[1]{\par#1:}
\newcommand\pfitemref[1]{\pfitemx{\ref{#1}}}


\newcommand{\refT}[1]{Theorem~\ref{#1}}

\newcommand{\refL}[1]{Lemma~\ref{#1}}

\newcommand{\refR}[1]{Remark~\ref{#1}}

\newcommand{\refS}[1]{Section~\ref{#1}}

\newcommand{\refProp}[1]{Proposition~\ref{#1}}

\newcommand{\refE}[1]{Example~\ref{#1}}
\newcommand{\refEs}[1]{Examples~\ref{#1}}


\newcommand\marginal[1]{\marginpar[\raggedleft\tiny #1]{\raggedright\tiny#1}}
\newcommand\REM[1]{{\raggedright\texttt{[#1]}\par\marginal{XXX}}}
\newcommand\XREM[1]{\relax}

\begingroup
  \count255=\time
  \divide\count255 by 60
  \count1=\count255
  \multiply\count255 by -60
  \advance\count255 by \time
  \ifnum \count255 < 10 \xdef\klockan{\the\count1.0\the\count255}
  \else\xdef\klockan{\the\count1.\the\count255}\fi
\endgroup




\newcommand{\sumj}{\sum_{j=1}^\infty}

\newcommand{\sumr}{\sum_{r=1}^\infty}
\newcommand{\sumik}{\sum_{i=1}^k}
\newcommand{\sumin}{\sum_{i=1}^n}

\newcommand{\prodik}{\prod_{i=1}^k}

\newcommand\set[1]{\ensuremath{\{#1\}}}

\newcommand\xpar[1]{(#1)}
\newcommand\bigpar[1]{\bigl(#1\bigr)}
\newcommand\Bigpar[1]{\Bigl(#1\Bigr)}

\newcommand\bigsqpar[1]{\bigl[#1\bigr]}

\newcommand\xcpar[1]{\{#1\}}

\newcommand\abs[1]{\lvert#1\rvert}
\newcommand\bigabs[1]{\bigl\lvert#1\bigr\rvert}
\newcommand\Bigabs[1]{\Bigl\lvert#1\Bigr\rvert}

\def\rompar(#1){\textup(#1\textup)}    

\def\xexp(#1){e^{#1}}

\newcommand\ntoo{\ensuremath{{n\to\infty}}}

\newcommand\bmin{\land}

\newcommand\downto{\searrow}
\newcommand\upto{\nearrow}
\newcommand\punkt{\xperiod}    
\newcommand\iid{i.i.d\punkt}    
\newcommand\ie{i.e\punkt}
\newcommand\eg{e.g\punkt}

\newcommand\cf{cf\punkt}
\newcommand{\as}{a.s\punkt}


\newcommand\ii{\mathrm{i}}

\newcommand{\tend}{\longrightarrow}
\newcommand\dto{\overset{\mathrm{d}}{\tend}}
\newcommand\pto{\overset{\mathrm{p}}{\tend}}

\newcommand\lito{\overset{L^1}{\tend}}

\newcommand\Op{O_{\mathrm p}}

\newcommand\bbZ{\mathbb Z}

\newcommand\bbZgeo{\mathbb Z_{\ge0}}

\newcounter{CC}
\newcounter{cc}

\newcommand\E{\operatorname{\mathbb E{}}}
\renewcommand\P{\operatorname{\mathbb P{}}}

\newcommand\Var{\operatorname{Var}}

\newcommand\Po{\operatorname{Po}}
\newcommand\Bi{\operatorname{Bi}}

\newcommand\Ge{\operatorname{Ge}}

\newcommand\gd{\delta}
\newcommand\gD{\Delta}
\newcommand\gf{\varphi}
\newcommand\gam{\gamma}

\newcommand\gs{\sigma}

\newcommand\gss{\sigma^2}

\newcommand\eps{\varepsilon}

\renewcommand\phi{\xxx}  

\newcommand\cT{{\mathcal T}}

\newcommand\indic[1]{\boldsymbol1\xcpar{#1}}

\newcommand\qw{^{-1}}
\newcommand\qww{^{-2}}
\newcommand\qq{^{1/2}}
\newcommand\qqw{^{-1/2}}

\newcommand\intoo{\int_0^\infty}

\newcommand\intpipi{\int_{-\pi}^\pi}

\newcommand\dd{\,\mathrm{d}}
\newcommand\ddx{\mathrm{d}}

\newcommand{\chf}{characteristic function}

\newcommand\rhs{right-hand side}

\newcommand\GW{Galton--Watson}
\newcommand\GWt{\GW{} tree}
\newcommand\cGWt{conditioned \GW{} tree}
\newcommand\GWp{\GW{} process}

\newcommand\xoo{_1^\infty}
\newcommand\xooo{_0^\infty}

\newcommand\nux{n}
\newcommand\nnx{N}
\newcommand\ttt{{\mathbf t}}
\newcommand\ntt{\nnx_{\ttt}}
\newcommand\nut{\nux_\ttt}
\newcommand\nnqr{\nnx_{t_{q,r}}}
\newcommand\nuqr{\nux_{t_{q,r}}}
\newcommand\nttm{N_\ttt^M}
\newcommand\nutm{\nu_\ttt^M}
\newcommand\ctn{\cT_n}
\newcommand\ct{\cT}
\newcommand\df{depth first}
\newcommand\dfo{depth first order}
\newcommand\gdt{\gD(\ttt)}
\newcommand\bm{\mathbf{m}}
\newcommand\bbzzk{\bbZgeo^k}
\newcommand\sumbm{\sum_{\bm}}
\newcommand\ddt{\frac{\ddx}{\ddx t}}
\newcommand\px[1]{{\mathsf{P}_{#1}}}
\newcommand\pk{\px{k}}
\newcommand\pl{\px{\ell}}
\newcommand\tqr{t_{q,r}}
\newcommand\vv{\varpi}
\newcommand\vvl{\vv_\ell}
\newcommand\hT{\widehat T}




\hyphenation{Upp-sala}


\begin{document}

\begin{abstract} 
We show that the number of copies of a given rooted tree in a conditioned
Galton--Watson tree satisfies a law of large numbers under a minimal
moment condition on the offspring distribution.
\end{abstract}

\maketitle


\section{Introduction}\label{S:intro}

Let $\ctn$ be a random \cGWt{} with $n$ nodes, defined by an offspring
distribution $\xi$ with mean $\E\xi=1$,
and let $\ttt$ be a fixed ordered rooted tree. 
We are interested in the number of copies of $\ttt$ as a (general)
subtree of $\ctn$,
which we denote by $\ntt(\ctn)$.
For details of these and other definitions, see \refS{Snot}.
Note that we consider subtrees in a general sense. (Thus, e.g., not just
fringe trees; for them, see similar results in \cite{SJ285}.)

The purpose of the present paper is to show the following law of large
numbers under minimal moment assumptions.
Let  $\nut(T)$ be the number of rooted copies of $\ttt$ in a tree $T$, 
\ie, copies with the root at the root of $T$. Further, let $\gdt$ be the
maximum outdegree in $\ttt$.

\begin{theorem}\label{T1}
  Let $\ttt$ be a fixed ordered tree, and let $\ctn$ be a \cGWt{} defined by an
  offspring distribution $\xi$ with $\E\xi=1$ and $\E\xi^{\gdt}<\infty$.
Also, let $\cT$ be a \GWt{} with the same offspring distribution.
Then, as \ntoo,
\begin{align}
  \label{t1}
\ntt(\ctn)/n \lito \E\nut(\cT),
\end{align}
where the limit is finite  and given explicitly by \eqref{l1a} below.

Equivalently,
\begin{align}
  \ntt(\ctn)/n &\pto \E\nut(\cT),\label{t1p}                   
\end{align}
and
\begin{align}
\E\ntt(\ctn)/n &\to \E\nut(\cT).\label{t1e}
\end{align}
\end{theorem}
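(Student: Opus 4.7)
The natural starting point is the decomposition
\begin{equation*}
\ntt(T)=\sum_{v\in T}\nut(T_v),
\end{equation*}
valid because every embedded copy of $\ttt$ in $T$ is determined by the image $v$ of its root together with a rooted copy of $\ttt$ inside the fringe subtree $T_v$. The theorem thereby reduces to an $L^1$ averaging statement for the functional $\nut$ over the fringe subtrees of $\ctn$.

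To identify the limit, I would condition on the root offspring $\xi$ of $\cT$: if the root of $\ttt$ has outdegree $d$ with children carrying subtrees $\ttt_1,\dots,\ttt_d$, the branching property yields the recursion
\begin{equation*}
\E\nut(\cT)=\E\binom{\xi}{d}\prod_{i=1}^{d}\E\nu_{\ttt_i}(\cT),
\end{equation*}
which iterates to $\prod_{u\in V(\ttt)}\E\binom{\xi}{d_u}$, where $d_u$ is the outdegree of $u$ in $\ttt$. Each factor is bounded by $\E\binom{\xi}{\gdt}$, which is finite under the hypothesis $\E\xi^{\gdt}<\infty$, so the limit is finite; I expect this product to match \eqref{l1a}.

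For the bounded functional $f=\nut\wedge M$, standard fringe-subtree convergence for \cGWt s (\cite{SJ285}) gives
\begin{equation*}
\frac{1}{n}\sum_{v\in\ctn}(\nut\wedge M)(T_v^{\ctn})\lito\E\bigsqpar{(\nut\wedge M)(\cT)},
\end{equation*}
and the right-hand side increases to $\E\nut(\cT)$ as $M\to\infty$. Once the expectation statement \eqref{t1e}, namely $\E\ntt(\ctn)/n\to\E\nut(\cT)$, is established, the expected tail $\E[\frac{1}{n}\sum_v(\nut-\nut\wedge M)(T_v^{\ctn})]$ converges to $\E\nut(\cT)-\E[(\nut\wedge M)(\cT)]$, which vanishes as $M\to\infty$; Markov's inequality then delivers \eqref{t1p}, and a Scheff\'e-type argument for nonnegative variables upgrades this to \eqref{t1}. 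The \emph{main obstacle} is therefore proving \eqref{t1e} under the \emph{minimal} moment assumption $\E\xi^{\gdt}<\infty$. My plan is to write $\E[\nut(T_v^{\ctn})]$ using the explicit distribution of the fringe subtree at $v$ in $\ctn$ (a \GWt{} conditioned on its size, with size having a known law), sum over $v\in\ctn$, and pass to the limit by dominated convergence; the required envelope $\E[\nut(T_v^{\ctn})]\le C\,\E\nut(\cT)$ uniform in $n$ and $v$ must be built from factorial moments of $\xi$ of order at most $\gdt$ only, which is delicate precisely because stronger moment assumptions are avoided.
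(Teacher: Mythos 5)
Your overall architecture matches the paper's: the additive-functional decomposition $\ntt(T)=\sum_{v}\nut(T^v)$, the product formula for $\E\nut(\cT)$ (which is exactly \eqref{l1a}), the truncation $\nut\wedge M$ handled by the bounded-toll results of \cite{SJ285}, and the reduction of everything to the mean convergence \eqref{t1e}. You have also correctly located the crux. The problem is that the route you propose for the crux cannot work: the envelope $\E\bigsqpar{\nut(\ctn^v)}\le C\,\E\nut(\cT)$ \emph{uniformly in $n$ and $v$} is false under the hypotheses of the theorem. Taking $v$ to be the root gives $\ctn^v=\ctn$, and \refL{L3} (see also \refR{Rht}) shows that $\E\nut(\ctn)\to\infty$ whenever $\E\xi^{\gdt+1}=\infty$, which the assumption $\E\xi^{\gdt}<\infty$ permits; \refE{Ebad} shows it can even be as large as $n^{1/2-o(1)}$. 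More generally, any vertex whose fringe subtree is forced to be large has $\E\nut$ of its fringe subtree inflated by the size-conditioning beyond any constant multiple of $\E\nut(\cT)$. So no dominated-convergence argument over $v$ with a constant envelope can close the gap.

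What the paper does instead is prove the sharp non-uniform bound $\E\nut(\ctn)=o(n\qq)$ (\refL{Lon}); this is precisely the condition under which \cite[Remark 5.3 and Theorem 1.5]{SJ285} yield \eqref{t1e}, because fringe subtrees of size $k$ occur at roughly $n\P(|\cT|=k)\asymp nk^{-3/2}$ vertices, so a toll that is $o(k\qq)$ on size-$k$ conditioned trees contributes $o(n)$ in total. Proving this bound is where the real work lies: the paper derives the explicit formula \eqref{l1b} for $\E\nut(\ctn)$ from the Otter--Dwass formula, establishes the local estimate $\P(S_n=n-m)\le C|m|\qw$ of \refL{Lsn} by a characteristic-function and integration-by-parts argument, and then applies dominated convergence in the degree-sequence index $\bm$ rather than in $v$. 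You would need to supply an equivalent of \refL{Lon}. A secondary omission: when $\gdt\le1$ the hypotheses do not give $\E\xi^2<\infty$, so the \cite{SJ285} machinery is unavailable; the paper handles that case (paths) separately by an elementary direct argument.
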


The fact that \eqref{t1} is equivalent to \eqref{t1p}--\eqref{t1e} is an
instance of the general fact that for any random variables,
convergence in $L^1$ is equivalent to convergence in probability together
with convergence of the means of the absolute values (\ie, in this case, with
non-negative variables, the means); see \eg{} \cite[Theorem 5.5.4]{Gut}.
We nevertheless state both versions for convenience.

\citet{Drmotaetal} (see also \cite[Section 3.3]{Drmota})  considered 
patterns in random trees;
their patterns differ from the subgraph counts above in that
some external vertices are added to $\ttt$, and that one only considers copies
of $\ttt$ in a tree $T$ such that each internal vertex in the copy
has the same degree
in $T$ as in $\ttt$ (counting also edges to external vertices);
equivalently,
each vertex in $\ttt$ is equipped with a number, and one considers only
copies of $\ttt$ where the vertex degrees match these numbers.
(Another difference is that \cite{Drmotaetal} consider unrooted trees, but
the proof proceeds by first considering rooted [planted] trees.
Furthermore, only uniformly random labelled trees are considered in
\cite{Drmotaetal}, but the
proofs extend to suitable more general \cGWt{s}, as remarked in
\cite{Drmotaetal} 
and shown explicitly in \cite{Kok1,Kok2}.)
It was shown in \citet{Drmotaetal} that the number of occurences of such a
pattern  is asymptotically normal, with asymptotic mean and variance both of
the order $n$ (except that the variance might be smaller in at least one
exceptional degenerate case),
which of cource entails a law of large
numbers. 
Moreover, \cite{Drmotaetal} discuss briefly generalizations, including
subtrees without further degree conditions as in the present paper; they
expect asymptotic normality to hold in this case too, but it seems that
their method, which is based on setting up and analyzing a system of
functional equations for generating functions, 
in general would require extensions to infinite systems, which as far as we
know has not been pursued. (See \cite{DrmotaRR} for a related problem.)
See further \refS{Sfurther}.

Our method is probabilistic, and quite different from the analysis of
generating functions in \cite{Drmotaetal}.

\section{Notation}\label{Snot}

All trees are rooted and ordered. The root of a tree $T$ is denoted $o=o_T$.
The size $|T|$ of a tree $T$ is defined as the number of vertices in $T$.

The \emph{degree} $d(v)$ of a vertex $v\in T$ always means the outdegree, \ie,
the number of children of $v$.
The \emph{degree sequence} of $T$ is the sequence of all degrees $d(v)$,
$v\in T$, for definiteness in \dfo.
Let $\gD(T):=\max_{v\in T}d(v)$ be the maximum (out)degree in $T$.

A (general) \emph{subtree} $T'$ of a tree $T$ is a non-empty connected
subgraph of $T$;
we regard a subtree as a rooted tree in the obvious way, with the root being
the vertex in $T'$ that is closest to the root in $T$.
Note that for any vertex $v\in T'$, its set of children in $T'$ is a
subset of its set of children in $T$; the order of the children of $v$ in
$T'$ is 
(by definition) the same as their relative order in $T$.

If $v\in T$, the \emph{fringe subtree} $T^v$ is the subtree of $T$ consisting
of $v$ and all its descendants; this is thus a subtree with root $v$.

If $\ttt$ and $T$ are ordered rooted tree, let $\ntt(T)$ be the number of (general)
subtrees of $T$ that are isomorphic to $\ttt$ (as ordered trees),
and let $\nut(T)$ be the number of such subtrees that furthermore have root
$o_T$. Then $\nut(T^v)$ is the number of subtrees with root $v$ 
isomorphic to $\ttt$, and thus
\begin{align}\label{ntt}
  \ntt(T) = \sum_{v\in T}\nut(T^v).
\end{align}
In other words, $\ntt(T)$ is an additive functional with toll function
$\nut(T)$, see \eg{} \cite{SJ285}.

Let $\cT$ be a random \GWt{} defined by an offspring distribution 
$(p_i)\xooo$,
and let $\ctn$ be the \cGWt{} defined as $\cT$ conditioned on $|\cT|=n$
(tacitly considering only $n$ such that $\P\bigpar{|\cT|=n}>0$); see \eg{}
\cite{SJ264} for a survey.
We let $\xi$ be a random variable with the distribution $(p_i)\xooo$;
we call both $(p_i)\xooo$ and (with a minor abuse) $\xi$ the \emph{offspring
  distribution}.
We will only consider offspring distributions with $\E\xi=1$ (\ie,
$\xi$ is \emph{critical}). 
(We often repeat this for emphasis.)
Let $\gss:=\Var\xi\le\infty$; we tacitly assume $\gss>0$, 
but do not require $\gss<\infty$ unless we say so.

$C$ and $c$ denote unspecified constants that may vary from one occurrence
to the next. They may depend on parameters such as the offspring
distribution or the fixed tree $\ttt$, but they never depend on $n$.

Convergence in probability and distribution is denoted $\pto$ and $\dto$,
respectively. 
Unspecified limits are as \ntoo.

\section{Proof}\label{Spf}

We begin by finding the expectation of $\nut$ for both unconditioned and
conditioned \GWt{s}.
Let 
\begin{align}
  S_n:=\sumin \xi_i,
\end{align}
where $\xi_1,\xi_2,\dots$ are \iid{} copies of $\xi$.

\begin{lemma}
  \label{L1}
Let $\ttt$ be a fixed ordered tree with  degree sequence $d_1,\dots,d_k$, 
where thus $k=|T|$. 
\begin{romenumerate}
  
\item \label{L1a}
Then
\begin{align}\label{l1a}
  \E \nut(\ct)=\prodik \E\binom \xi{d_i}
=\prodik \sum_{m_i=d_i}^\infty p_{m_i}\binom{m_i}{d_i}.
\end{align}

\item \label{L1b}
If $n>k$, then, with $m:=\sumik m_i$,
\begin{align}\label{l1b}
  \E \nut(\ctn)
=\frac{n}{n-k} \sum_{m_1,\dots m_k\ge0}\prodik  p_{m_i}\binom{m_i}{d_i}
\cdot \frac{(m-k+1)\P(S_{n-k}=n-m-1)}{\P(S_n=n-1)}.
\end{align}

\end{romenumerate}
\end{lemma}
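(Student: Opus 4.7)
My plan is to treat the two parts separately: induction on $|\ttt|$ for~\ref{L1a} and a direct decomposition argument for~\ref{L1b}.

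For~\ref{L1a}, I proceed by induction on $k=|\ttt|$. The base case $k=1$ is trivial since $\nut(T)\equiv 1$ and $\binom{\xi}{0}=1$. For the inductive step, let the root of $\ttt$ have $d_1$ children heading subtrees $\ttt_1,\dots,\ttt_{d_1}$. Any root-preserving embedding of $\ttt$ into $\cT$ amounts to choosing an order-preserving subset of size $d_1$ of the root's children in $\cT$ and then embedding each $\ttt_s$ into the corresponding fringe subtree, so
\[
  \nut(\cT)=\sum_{1\le j_1<\dots<j_{d_1}\le\xi}\prod_{s=1}^{d_1}\nu_{\ttt_s}(\cT^{(j_s)}).
\]
By the branching property, conditional on $\xi$ the fringe subtrees $\cT^{(j)}$ are i.i.d.\ copies of $\cT$, so taking expectations gives $\E\nut(\cT)=\E\binom{\xi}{d_1}\cdot\prod_{s=1}^{d_1}\E\nu_{\ttt_s}(\cT)$, and~\eqref{l1a} follows from the induction hypothesis.

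For~\ref{L1b}, I use $\E\nut(\ctn)=\E[\nut(\cT)\indic{|\cT|=n}]/\P(|\cT|=n)$ and decompose $\cT$ along an embedded copy of $\ttt$. Every root-preserving embedded copy is specified by (i) the degrees $m_i\ge d_i$ in $\cT$ of the $k$ embedded vertices, contributing probability $\prod_i p_{m_i}$; and (ii) at each embedded vertex $i$, a choice of which $d_i$ of its $m_i$ children in $\cT$ are themselves embedded, giving $\prod_i\binom{m_i}{d_i}$ combinatorial choices. By the branching property, the remaining $\sum_i(m_i-d_i)=m-k+1$ (using $\sum_i d_i=k-1$) unmarked children initiate independent copies of $\cT$, and $|\cT|=n$ translates to these fringe subtrees having total size $n-k$. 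Therefore
\[
  \E[\nut(\cT)\indic{|\cT|=n}]=\sum_{\bm}\prod_{i=1}^k p_{m_i}\binom{m_i}{d_i}\,\P\Bigpar{\sum_{l=1}^{m-k+1}|\cT^{(l)}|=n-k},
\]
with $\cT^{(l)}$ i.i.d.\ copies of $\cT$. Applying the classical forest-size formula $\P\bigpar{\sum_{l=1}^r|\cT^{(l)}|=s}=(r/s)\P(S_s=s-r)$ with $r=m-k+1$ and $s=n-k$, together with its special case $\P(|\cT|=n)=n\qw\P(S_n=n-1)$ (Dwass), and then dividing, yields~\eqref{l1b} after elementary algebra.

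The main obstacle is the combinatorial bookkeeping in~\ref{L1b}: recognizing that the fringe subtrees at the unmarked children are i.i.d.\ copies of $\cT$ (this is the branching property applied at each embedded vertex) and correctly getting the count $m-k+1$ via $\sum_i d_i=k-1$. Once this decomposition is in place, everything else is a direct computation based on standard cyclic-lemma facts about GW tree sizes.
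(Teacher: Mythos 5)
Your proposal is correct and follows essentially the same route as the paper: both parts rest on the branching property to factor the probability that a prospective embedded copy has the prescribed degrees, and part (ii) uses exactly the paper's decomposition into the $m-k+1$ unaccounted children together with the Otter--Dwass formula for forests and for a single tree. The only cosmetic difference is that you organize part (i) as an induction via the root decomposition, whereas the paper performs the equivalent depth-first enumeration over degree sequences directly.
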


\begin{proof}

\pfitemref{L1a}  
We try to construct a copy $t'$ of $\ttt$ in $\ct$, with the given root $o$.
Let $m_1$ be the root degree of $\ct$. Then there are $\binom{m_1}{d_1}$
ways to choose the $d_1$ children of the root that belong to $t'$.
Fix one of these choices, say $v_{11},\dots,v_{1d_1}$.

Next, let $m_2$ be the number of children of $v_{11}$ in $\ct$.
Given $m_2$, 
there are $\binom{m_2}{d_2}$
ways to choose the $d_2$ children of $v_{11}$ that belong to $t'$.
Fix one of these choices.

Continuing in the same way, taking the vertices of $t'$ in \df{} order, we
find for every sequence $m_1,\dots,m_k$ of non-negative integers,
a total of $\prod_1^k\binom{m_i}{d_i}$ choices, and each of these gives a
tree $t'\cong t$ provided the selected vertices in $\cT$ have degrees
$m_1,\dots,m_k$, which occurs with probability $\prodik p_{m_i}$.
Hence,
\begin{align}
  \E \nut(\ct)&
= \sum_{m_1,\dots m_k\ge0} \prodik p_{m_i} \prodik \binom{m_i}{d_i}
= \sum_{m_1,\dots m_k\ge0} \prodik \Bigpar{p_{m_i} \binom{m_i}{d_i}}
\notag\\&
=\prodik \sum_{m_i=0}^\infty p_{m_i}\binom{m_i}{d_i},
\end{align}
and \eqref{l1a} follows.

\pfitemref{L1b}  
Consider again $\ct$. We have just shown that
each sequence $m_1,\dots,m_k$ gives $\prodik\binom{m_i}{d_i}$
choices of possible subtrees $t'\cong t$ in $\cT$, 
where the vertices of $t'$ are
supposed to have degrees $m_1,\dots m_k$ in $\ct$.
This gives a total of $m=\sumik m_i$ children, of which $k-1$ are the non-root
vertices in $t'$, and thus $m-(k-1)$ are unaccounted children.
Then, $|\ct|=n$ if and only if these $m-k+1$ children and their descendants
yield exactly $n-k$ vertices. 

Condition on $m_1,\dots,m_k$ and one of the corresponding choices of $t'$.
The probability that the $m-k+1$ children above and their descendants are
$n-k$ vertices  is the probability that a \GWp{} (with offspring
distribution $\xi$) started witk $m-k+1$ individuals has total progeny
$n-k$, which by the Otter--Dwass formula 
\cite{Dwass}
(see also 
\cite{Pitman:enum} and the further references there)
is given by 
\begin{align}
  \frac{m-k+1}{n-k}\P\bigpar{S_{n-k}=n-k-(m-k+1)}.
\end{align}
Multiplying with $\prodik p_{m_i}$, the probability that the vertices in
$t'$ have the right degrees in $\ct$, 
and summing over all possibilities, we obtain
\begin{align}\label{lab}
&  \E\bigsqpar{ \nut(\ctn)}\P\bigpar{|\ct|=n}
=
 \E\bigsqpar{\nut(\ct)\mid |\ct|=n}\P\bigpar{|\ct|=n}
=  \E\bigsqpar{\nut(\ct)\indic{|\ct|=n}}
\notag\\&\qquad
= \sum_{m_1,\dots m_k\ge0}\prodik  p_{m_i}\binom{m_i}{d_i}
\cdot \frac{m-k+1}{n-k}\P(S_{n-k}=n-m-1)
.\end{align}
By the Otter--Dwass formula again 
(this time the original case in \cite{Otter}),
\begin{align}
  \P\bigpar{|\ct|=n}
=
\frac{1}n P\bigpar{S_n=n-1}
\end{align}
and \eqref{l1b} follows.
(Cf.\ \cite[Lemma 15.9]{SJ264} for a related result.)
\end{proof}

We need estimates of the probabilities 
$\P\bigpar{S_n=n-m}$. The estimate \eqref{erika0} below is standard;
we expect that also \eqref{erika1} is known, but we have not found a
reference, so we give a proof. (It is related to more difficult estimates in
\eg{} \cite{Petrov} assuming more moments, see \refR{RP} below.)
\begin{lemma}
  \label{Lsn}
 Suppose that $\E\xi=1$ and $\E\xi^2<\infty$.
Then, 
uniformly for all $n\ge1$ and $m\in\bbZ$, 
\begin{align}\label{erika0}
\P\bigpar{S_n=n-m} &\le C n\qqw,
\\\label{erika1}
\P\bigpar{S_n=n-m} &\le C |m|\qw.
\end{align}
\end{lemma}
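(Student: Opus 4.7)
The first bound \eqref{erika0} I would obtain directly from the Kolmogorov--Rogozin concentration inequality: since $\gss=\Var\xi\in(0,\infty)$, that gives $\sup_{k\in\bbZ}\P(S_n=k)\le Cn\qqw$ uniformly in $n\ge1$, which is \eqref{erika0}. (Alternatively, the local CLT in the lattice case gives the same.)

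For \eqref{erika1}, the plan is to do Fourier inversion on the centered walk $S_n-n$ and exploit $\E(\xi-1)=0$ through a single integration by parts. Let $\gf(t):=\E e^{it(\xi-1)}$, so $\gf(0)=1$, $\gf'(0)=0$ and $\gf''(0)=-\gss$. For integer-valued $\xi$, Fourier inversion gives
\[
\P(S_n=n-m)=\frac1{2\pi}\int_{-\pi}^{\pi}\gf(t)^n e^{itm}\,\dd t.
\]
Since $\gf$ is $2\pi$-periodic and $e^{\pm i\pi m}$ agree for integer $m$, integrating by parts against $\ddd{t}e^{itm}=ime^{itm}$ will kill the boundary term and yield, for $m\neq0$,
\[
|m|\,\P(S_n=n-m)\le\frac{n}{2\pi}\int_{-\pi}^{\pi}|\gf(t)|^{n-1}|\gf'(t)|\,\dd t.
\]
Everything then reduces to showing that this right-hand side is $O(1)$ uniformly in $n$.

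For that step I would use two standard analytic inputs. From $\E\xi^2<\infty$ one has $|\gf(t)|\le 1-ct^2$ for $|t|\le\gd$ and, crucially, $|\gf'(t)|\le C|t|$ for $|t|\le\gd$ --- the latter being the gain from $\gf'(0)=0$. Assuming for now that $\xi-1$ is aperiodic (so that $|\gf(t)|\le\rho<1$ on $\gd\le|t|\le\pi$), the outer part of the integral contributes $O(\rho^{n})$, while the inner part is bounded by $C\int_0^{\gd}(1-ct^2)^{n-1}\,t\,\dd t=O(1/n)$. Multiplying by $n$ yields $n\int_{-\pi}^{\pi}|\gf|^{n-1}|\gf'|\,\dd t=O(1)$, which proves \eqref{erika1}.

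The main obstacle I expect is handling offspring distributions for which $\xi-1$ has span $d>1$: then $|\gf|$ also equals $1$ at the additional points $2\pi k/d\in(-\pi,\pi]$ with $k\neq 0$. The remedy is routine --- split the integral into small neighborhoods of each such point plus an exponentially small remainder, and repeat the near-zero analysis around each one, using that $\gf(2\pi k/d+s)$ has the same Taylor structure in $s$ as $\gf$ does at $0$ (up to a unimodular factor). This is the only point requiring genuine bookkeeping; the rest of the argument is a clean application of the $\gf'(0)=0$ identity.
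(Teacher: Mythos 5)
Your proposal is correct and follows essentially the same route as the paper: \eqref{erika0} from a standard concentration/local limit bound, and \eqref{erika1} by Fourier inversion plus one integration by parts, exploiting $\gf'(0)=0$ to get $|\gf'(t)|\le C|t|$ and a Gaussian-type bound on $|\gf(t)|$, with the span $d>1$ case handled by the same routine modifications the paper also leaves to the reader.
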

\begin{proof}
\pfitem{\ref{erika0}}
This is well-known. In fact, the classical local limit theorem,
see \eg{} \cite[Theorem VII.1]{Petrov}, 
gives the much more precise result that, uniformly in $m\in\bbZ$ as \ntoo,
\begin{align}\label{local}
  \P\bigpar{S_n=n-m}
=\frac{h}{\gs\sqrt{n}}\Bigpar{\frac{1}{\sqrt{2\pi}}e^{-m^2/{2\gss n}}+o(1)}.
\end{align}
where $h$ is the span of the offspring distribution.
(Provided $h|(n-m)$; otherwise the probability is 0.)

\pfitem{\ref{erika1}}
Let $\gf(t):=\E e^{\ii t (\xi-1)}$ be the \chf{} of $\xi-1=\xi-\E\xi$; note
that $\gf(t)$ is twice differentiable because $\E\xi^2<\infty$.
Then, by Fourier inversion,
\begin{align}\label{sofie}
  \P\bigpar{S_n=n-m}
=\frac{1}{2\pi}\intpipi e^{\ii mt}\gf(t)^n\dd t.
\end{align}
Hence, using an integration by parts,
\begin{align}
2\pi\ii m  \P\bigpar{S_n=n-m}
=\intpipi\Bigpar{\ddt e^{\ii mt}}\gf(t)^n\dd t
=-\intpipi e^{\ii mt}\ddt\bigpar{\gf(t)^n}\dd t
\end{align}
and thus
\begin{align}\label{magnus}
| m|  \P\bigpar{S_n=n-m}
\le
\intpipi \Bigabs{\ddt\bigpar{\gf(t)^n}}\dd t
=
n\intpipi \abs{\gf'(t)}\abs{\gf(t)}^{n-1}\dd t.
\end{align}
The assumptions yield $\gf'(0)=\E(\xi-1)=0$ and
$\sup|\gf''(t)|=|\gf''(0)|=\Var\xi=C<\infty$, 
and thus 
\begin{align}
  \label{emma}
|\gf'(t)|\le Ct
.\end{align}
Assume for simplicity that the span of $\xi$ is 1 (the general case is
similar, with standard modifications). Then, as is well-known, it is easy to
see that there exist $c>0$ such that
\begin{align}\label{jesper}
  |\gf(t)|\le e^{-ct^2},\qquad |t|\le\pi.
\end{align}
Using \eqref{emma} and \eqref{jesper} in \eqref{magnus} we obtain
\begin{align}\label{wilhelm}
| m|  \P\bigpar{S_n=n-m}
\le
n C\intpipi |t| e^{-c(n-1)t^2}\dd t
\le Cn\intoo t e^{-cnt^2}\dd t
=C
,\end{align}
which proves \eqref{erika1}.
\end{proof}

\begin{remark}\label{RP}
  In the same way, taking two derivatives inside \eqref{sofie}, one obtains
\begin{align}\label{erika2}
\P\bigpar{S_n=n-m} &\le C n\qq m\qww,
\end{align}
which is stronger for large $m$; note that \eqref{erika0} and \eqref{erika2}
  imply \eqref{erika1}.
Furthermore, even stronger estimates hold if we assume more moments;
see \cite[Theorem VII.16]{Petrov} for a precise asymptotic estimate assuming
$\E\xi^k<\infty$ for some $k\ge3$. In fact, \cite[Theorem VII.16]{Petrov}
holds for $k=2$ too, which can be seen by refining the argument above;
this is an asymptotic estimate that is more  precise than \eqref{erika2}
(and implies it).
\end{remark}

\begin{lemma}
  \label{Lon}
Let $\ttt$ be a fixed ordered tree and suppose that $\E\xi=1$, $\E\xi^2<\infty$
and $\E\xi^{\gdt}<\infty$.
Then 
$\E\nut(\ctn)=o\bigpar{n\qq}.$
\end{lemma}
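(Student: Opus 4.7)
The plan is to start from the explicit formula in \refL{L1}\ref{L1b}, estimate the prefactor using the local central limit theorem, and control the sum by dominated convergence, with the two estimates in \refL{Lsn} providing respectively the prefactor rate and the dominating function.

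First, by the local limit theorem \eqref{local} applied to $m=1$, $\P(S_n=n-1) \sim h/(\gs\sqrt{2\pi n})$ as \ntoo{} along the (arithmetic) subsequence of $n$ for which this probability is positive; hence the prefactor $n/\bigpar{(n-k)\P(S_n=n-1)}$ in \eqref{l1b} is $\Theta(n\qq)$. So it suffices to show that the remaining sum
\begin{align*}
A_n := \sum_{m_1,\dots,m_k\ge0}\prodik  p_{m_i}\binom{m_i}{d_i}\cdot (m-k+1)\P(S_{n-k}=n-m-1)
\end{align*}
tends to $0$ as \ntoo.

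Second, I restrict the sum to $m_i\ge d_i$ (the other terms are killed by the binomial), so that $m=\sumik m_i\ge\sumik d_i=k-1$, hence $m-k+1\ge0$. For $m=k-1$ the prefactor $m-k+1$ vanishes; for $m\ge k$, the bound \eqref{erika1} applied with $n$ replaced by $n-k$ and target offset $m-k+1$ yields
\begin{align*}
(m-k+1)\P(S_{n-k}=n-m-1)\le C.
\end{align*}
Thus each summand in $A_n$ is dominated by $C\prodik p_{m_i}\binom{m_i}{d_i}$, uniformly in $n$. The assumption $\E\xi^{\gdt}<\infty$, together with $d_i\le\gdt$, yields $\E\binom{\xi}{d_i}<\infty$ for every $i$, and therefore
\begin{align*}
\sum_{m_1,\dots,m_k\ge0} C\prodik p_{m_i}\binom{m_i}{d_i}
=C\prodik \E\binom{\xi}{d_i}<\infty,
\end{align*}
which is the required summable dominator.

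Finally, for each fixed $(m_1,\dots,m_k)$, \eqref{erika0} gives $\P(S_{n-k}=n-m-1)=O(n\qqw)\to 0$, so every individual summand in $A_n$ tends to $0$. Dominated convergence then gives $A_n\to 0$, and multiplying by the $\Theta(n\qq)$ prefactor yields $\E\nut(\ctn)=o(n\qq)$, as claimed. The only delicate point is step two: the bound \eqref{erika0} alone would give each summand $\le C n\qqw m$, which after summation produces the wrong rate $O(n\qq)$ rather than $o(n\qq)$, so it is essential to use \eqref{erika1} to absorb the factor $m-k+1$ and obtain a $n$-uniform dominator.
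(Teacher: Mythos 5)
Your proof is correct and follows essentially the same route as the paper's: both start from \eqref{l1b}, use the local limit theorem to identify the $\Theta(n\qq)$ prefactor, apply \eqref{erika0} for the pointwise decay of each term and \eqref{erika1} to produce an $n$-uniform summable dominator (finite by $\E\xi^{\gdt}<\infty$), and conclude by dominated convergence. Your closing remark about why \eqref{erika0} alone is insufficient matches exactly the role the paper assigns to \eqref{erika1}.
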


\begin{proof}
  Let again 
the degree sequence of $\ttt$ be $d_1,\dots,d_k$.
For a vector $\bm=(m_1,\dots,m_k)\in \bbzzk$, let
\begin{align}\label{amma}
  a_\bm:=
\prodik  p_{m_i}\binom{m_i}{d_i}.
\end{align}
Then, \eqref{l1a}--\eqref{l1b} and the assumption $\E\xi^\gdt<\infty$ yield
\begin{align}\label{cecilia}
  \sumbm a_\bm=\E\nut(\cT)<\infty
\end{align}
and
for $n>k$,
with as above $m:=\sum_im_i =:|\bm|$ (and $C=1$, actually),
\begin{align}\label{selma}
  \E\nut(\ctn) \le C
\sumbm a_\bm
\cdot \frac{m\P(S_{n-k}=n-m-1)}{\P(S_n=n-1)}.
\end{align}
Denote the summand in \eqref{selma} by $b_{\bm,n}$.
By the local limit theorem \eqref{local},
as is well-known,
\begin{align}
  \label{sigrid}
\P(S_n=n-1)\sim c n\qqw,
\end{align}
and thus
\begin{align}\label{winston}
  b_{\bm,n}/n\qq \le 
C  ma_\bm \P(S_{n-k}=n-m-1).
\end{align}
Hence, \eqref{erika0} implies that for every fixed $\bm$, as \ntoo,
\begin{align}\label{eleonora}
  b_{\bm,n}/n\qq \le 
C  ma_\bm n\qqw \to 0.
\end{align}
Furthermore, \eqref{winston} and \eqref{erika1} 
yield
\begin{align}\label{anna}
  b_{\bm,n}/n\qq \le 
C  a_\bm,
\end{align}
which is summable by \eqref{cecilia}.
Consequently, dominated convergence shows that
\begin{align}\label{lina}
n\qqw   \sumbm b_{\bm,n}=  \sumbm b_{\bm,n}/n\qq \to0,
\end{align}
which together with \eqref{selma} yields the result
$n\qqw\E\nut(\ctn)\to0$.
\end{proof}

We will see in \refE{Ebad} below, that the estimate $o(n\qq)$ in \refL{Lon} is
best possible in general.
However, if we assume another moment on $\xi$, we can improve the estimate
to $O(1)$, and furthermore show that $\E\nut(\ctn)$ converges. We next show
this,  although it is not required for our main result.
\begin{lemma}\label{L3}
  Let $\ttt$ be a fixed tree with degree sequence $d_1,\dots,d_k$,
and suppose that $\E\xi=1$.
Then, as \ntoo,
\begin{align}\label{l3}
  \E\nut(\ctn)\to
\sumik (d_i+1)\E\binom{\xi}{d_i+1} \prod_{j\neq i} \E\binom\xi{d_j}.
\end{align}
In particular, $\E\nut(\ctn)=O(1)$ if\/ $\E\xi^{\gdt+1}<\infty$, while
$\E\nut(\ctn)\to\infty$ if\/ $\E\xi^{\gdt+1}=\infty$.
\end{lemma}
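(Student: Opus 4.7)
The plan is to start from the formula \eqref{l1b} and pass to the limit inside the sum over $\bm=(m_1,\dots,m_k)\in\bbzzk$. Writing
\begin{align*}
  \E\nut(\ctn) = \frac{n}{n-k}\sumbm a_{\bm}(m-k+1)\frac{\P(S_{n-k}=n-m-1)}{\P(S_n=n-1)},
\end{align*}
with $a_{\bm}$ as in \eqref{amma} and $m=|\bm|$, the local limit theorem \eqref{local} together with the asymptotic \eqref{sigrid} shows that for each fixed $\bm$ the ratio of probabilities tends to $1$, so the $\bm$-th summand converges to $(m-k+1)a_{\bm}$ as \ntoo.

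The key algebraic step is then to identify $\sumbm(m-k+1)a_{\bm}$ with the \rhs{} of \eqref{l3}. Factoring $a_{\bm}$ and applying the binomial identity $x\binom{x}{d}=(d+1)\binom{x}{d+1}+d\binom{x}{d}$ gives
\begin{align*}
\sumbm m_i a_{\bm}
= \E\Bigsqpar{\xi\binom{\xi}{d_i}}\prod_{j\ne i}\E\binom{\xi}{d_j}
= (d_i+1)\E\binom{\xi}{d_i+1}\prod_{j\ne i}\E\binom{\xi}{d_j} + d_i\prod_j\E\binom{\xi}{d_j}.
\end{align*}
Summing on $i$ and using $\sumik d_i = k-1$ (the number of edges in $\ttt$) together with $\sumbm a_{\bm}=\prod_i\E\binom{\xi}{d_i}$ from \eqref{l1a}, the $d_i$-contributions assemble into $(k-1)\prod_i\E\binom{\xi}{d_i}$, exactly cancelling the $-(k-1)\sumbm a_{\bm}$ that comes from the shift $-k+1$; what remains is the desired sum on the \rhs{} of \eqref{l3}.

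With pointwise convergence and this identity in hand, Fatou's lemma yields $\liminf_n\E\nut(\ctn)\ge\sumbm(m-k+1)a_{\bm}$, which equals the \rhs{} of \eqref{l3}. If $\E\xi^{\gdt+1}=\infty$, this \rhs{} is $+\infty$ through the factor $(d_i+1)\E\binom{\xi}{d_i+1}$ at any $i$ with $d_i=\gdt$, giving $\E\nut(\ctn)\to\infty$. If instead $\E\xi^{\gdt+1}<\infty$, I would dominate the summand by using \eqref{erika0} and $\P(S_n=n-1)\qw\le Cn\qq$ (from \eqref{sigrid}) to get $C(m+1)a_{\bm}$ for $n\ge 2k$; the moment hypothesis makes $\sumbm m\,a_{\bm}<\infty$, so dominated convergence produces the matching upper bound and in particular $\E\nut(\ctn)=O(1)$.

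The main obstacle is the algebraic cancellation in the second step: verifying that the infinite sum $\sumbm(m-k+1)a_{\bm}$ really does collapse to the compact expression on the \rhs{} of \eqref{l3}. Once that identity is in place, the LLT and moment bookkeeping are routine given the preparatory \refLs{L1}--\ref{Lon}.
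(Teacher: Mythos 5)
Your proposal is correct and follows essentially the same route as the paper: pointwise convergence of the summands in \eqref{l1b} via the local limit theorem, domination by $Ca_{\bm}(m-k+1)$ using \eqref{erika0} and \eqref{sigrid}, Fatou/dominated convergence according to whether $\E\xi^{\gdt+1}$ is finite, and the identity $\sum_i d_i=k-1$ combined with $(m_i-d_i)\binom{m_i}{d_i}=(d_i+1)\binom{m_i}{d_i+1}$ (your version $x\binom{x}{d}=(d+1)\binom{x}{d+1}+d\binom{x}{d}$ is the same identity, organized with an explicit cancellation) to evaluate the limit. The only cosmetic difference is that the paper cases on finiteness of $\sum_{\bm}a_{\bm}(m-k+1)$ rather than on the moment condition directly, which is equivalent.
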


\begin{proof}
Define again $a_\bm$ by \eqref{amma}, and
  denote the summand in \eqref{l1b} by $b'_{\bm,n}$, where as above
  $\bm=(m_1,\dots,m_k)\in\bbzzk$. 
It follows from the local limit theorem \eqref{local} that for every fixed
$\bm$, as \ntoo,
\begin{align}
  \frac{\P(S_{n-k}=n-m-1)}{\P(S_n=n-1)} 
=\frac{h\xpar{2\pi\gss( n-k)}\qqw\bigpar{1+o(1)}}
{h\xpar{2\pi\gss n}\qqw\bigpar{1+o(1)}}
\to 1.
\end{align}
(This holds also if the span $h>1$, assuming as we may that all $p_{m_i}>0$,
so $h|m$.) Hence,
\begin{align}\label{elfbrink}
  b'_{\bm,n}\to a_\bm(m-k+1).
\end{align}
Furthermore, by \eqref{erika0} and \eqref{sigrid},
\begin{align}
  \frac{\P(S_{n-k}=n-m-1)}{\P(S_n=n-1)} 
\le\frac{C n\qqw}{c n\qqw} =C,
\end{align}
and thus
\begin{align}\label{helene}
  b'_{\bm,n}\le C a_\bm(m-k+1).
\end{align}
Consequently, if $\sum_\bm a_\bm(m-k+1)<\infty$,
then
\begin{align}\label{sumb}
  \sum_\bm b'_{\bm,n}\to\sum_\bm a_\bm(m-k+1)
\end{align}
by \eqref{elfbrink}, \eqref{helene} and dominated convergence.
On the other hand, if $\sum_\bm a_\bm(m-k+1)=\infty$,
then $\sum_\bm b'_\bm\to\infty$ by \eqref{elfbrink} and Fatou's lemma, and
thus \eqref{sumb} holds in this case too.
Recalling \eqref{l1b}, this shows that in any case,
\begin{align}\label{duncan}
  \E\nut(\ctn)\to\sum_\bm a_\bm(m-k+1),
\end{align}
and it remains only to evaluate the limit.

Since $\ttt$ is a tree, we have $\sumik d_i=k-1$, and thus
$m-k+1=\sumik(m_i-d_i)$. Recalling the definition \eqref{amma} of $a_\bm$,
we thus have
\begin{align}\label{duff}
  \sum_\bm a_\bm(m-k+1) 
&=\sum_\bm\sumik (m_i-d_i) p_{m_i}\binom{m_i}{d_i}
  \prod_{j\neq i} p_{m_j}\binom{m_j}{d_j}
\notag\\&
=\sumik\sum_{m_i=0}^\infty p_{m_i}(m_i-d_i)\binom{m_i}{d_i}
  \prod_{j\neq i} \sum_{m_j=0}^\infty p_{m_j}\binom{m_j}{d_j}
,\end{align}
which equals the \rhs{} of \eqref{l3} because
$(m_i-d_i)\binom{m_i}{d_i}=(d_i+1)\binom{m_i}{d_i+1}$.
This completes the proof by \eqref{duncan}.
\end{proof}

\begin{remark}\label{Rht}
Assume only $\E\xi=1$.
If $\hT$ is the infinite size-biased \GWt{} defined by \citet{Kesten},
see also \cite[Section 5]{SJ264},
then $\ctn\dto\hT$ in a local topology (\ie, close to the root), see
\cite[Theorem 7.1]{SJ264}, and it follows that 
\begin{align}
  \label{alla}
\nut(\ctn)\dto\nut(\hT).
\end{align}
It is not difficult to see that $\E\nut(\hT)$ equals the \rhs{} of
\eqref{l3}, which thus says that $\E\nut(\ctn)\to\E\nut(\hT)$.
(This could presumably be used to give an alternative proof of \refL{L3},
but we prefer the direct proof above.)

In particular, if $\E\xi^{\gdt+1}=\infty$, then $\E\nut(\hT)=\infty$, and
thus \eqref{alla} and Fatou's lemma yield $\E\nut(\ctn)\to\infty$.
Hence, the last sentence in \refL{L3} holds also without the assumption
$\E\xi^2<\infty$. 
\end{remark}

We proceed to the proof of \refT{T1}. 
The case $\gdt\le1$ is special, since we then do not assume
$\E\xi^2<\infty$,
but on the other hand this case is simple and rather trivial, 
so we discuss it separately in the following example.
\begin{example}\label{Ep}
Consider the case $\gdt\le1$. This means that $\ttt$ is a path $\pk$
with $k\ge1 $ vertices, and thus length $k-1$.
A copy of $\ttt$ in a tree $T$ is thus a path consisting of $k$ vertices
$v_1,\dots,v_k$ such that $v_{i+1}$ is a child of $v_i$; such a path is
determined by its endpoint $v_k$, and every vertex of
depth (= distance from the root) at least $k-1$ is the endpoint of a copy of
$\ttt$. Hence, if $\nu_i(T)$ is the number of vertices in $T$ of depth $i$,
then
\begin{align}\label{epa}
  \nnx_{\pk}(T)=\sum_{i\ge k-1}\nu_i(T)
=
|T|-\sum_{i=0}^{k-2}\nu_i(T).
\end{align}
In particular, 
$\nnx_{\px1}(\ctn)=n$ and
$\nnx_{\px2}(\ctn)=n-1$ are deterministic; these are trivially
just the numbers of vertices and edges.

Moreover, as said in \refR{Rht}, assuming $\E\xi=1$, 
the random tree $\ctn$ converges locally in distribution as \ntoo,
see  \cite[Theorem 7.1]{SJ264}; in particular each $\nu_i(\ctn)$ converges in
distribution (to $\nu_i(\hT)$)
and thus $\nu_i(\ctn)=\Op(1)$ (\ie, is bounded in probability).
Hence, 
for every $k\ge1$, \eqref{epa} implies
\begin{align}\label{epb}
\nnx_{\pk}(\ctn)=n+\Op(1).
\end{align}
In particular,
$\nnx_{\pk}(\ctn)$ is more strongly concentrated than the dispersion of
order $n\qq$ typically seen in similar statistics, see \eg{} \refE{E11} and
\refS{Sfurther}.
\end{example}

\begin{proof}  [Proof of \refT{T1}]
Suppose first $\gdt\le1$.
Then $\ttt=\pk$ for some $k\ge1$ and
\refE{Ep} shows that \eqref{epb} holds, and thus
$\nnx_\pk(\ctn)/n\pto1$.
Furthermore, \eqref{l1a} yields 
\begin{align}
  \E\nux_{\pk}(\ct)=(\E \xi)^{k-1}=1, 
\end{align}
and thus \eqref{t1p} holds. Moreover, $\nnx_\pk(\ctn)/n\le1$ by \eqref{epa},
and thus dominated convergence applies to \eqref{t1p} 
and yields \eqref{t1e} and \eqref{t1}, 
see \eg{} \cite[Theorems 5.5.4 and 5.5.5]{Gut}.  

In the remainder of the proof we may thus assume $\gdt\ge2$, and thus, in
particular, $\E\xi^2<\infty$. 
(The arguments below use $\E\xi^2<\infty$, but apply to any $\gdt$.)

\refL{L1}\ref{L1a} and the assumption $\E\xi^{\gdt}<\infty$ show that
$\E\nut(\ct)<\infty$, and \refL{Lon} shows $\E\nut(\ctn)=o\bigpar{n\qq}$.
Hence \eqref{t1p} and \eqref{t1e} follow by \cite[Remark 5.3]{SJ285}.
However, since only a sketch of the proof is given in that remark, let us
add some details.

First, 
\eqref{t1e} follows by the
argument in the proof of \cite[Theorem 1.5(i)]{SJ285}, adding the factor
$n\qq$ at some places.

Next, define for $M>0$ the truncation $\nutm(T):=\nut(T)\bmin M$ and let
$\nttm(T):=\sum_{v\in T} \nutm(T^v)$ be the corresponding additive
functional, \cf{} \eqref{ntt}.
Let $\eps>0$. Since $\nutm(\cT)\upto\nut(\ct)$ as $M\to\infty$,
we can by monotone convergence, and $\E\nut(\ct)<\infty$,
choose $M$ such that 
\begin{align}\label{othello}
\E\nut(\cT)-\E\nutm(\ct)<\eps^2.
\end{align}
We have proved \eqref{t1e}, and similarly $\E\nttm(\ctn)/n\to\E\nutm(\cT)$ by
\cite[Theorem 1.3]{SJ285}, since $\nutm$ is bounded. Hence, \eqref{othello}
implies that for all
sufficiently large $n$, 
\begin{align}\label{desdemona}
\E\bigabs{\ntt(\ctn)/n-\nttm(\ctn)/n}=
\E\ntt(\ctn)/n-\E\nttm(\ctn)/n<\eps^2.
\end{align}
Furthermore, \cite[Theorem 1.3]{SJ285} also yields
$\nttm(\ctn)/n\pto\E\nutm(\cT)$.
Consequently, using also \eqref{othello} again, \eqref{desdemona}
 and Markov's inequality, if
$n$ is large,
\begin{align}
&  \P\bigpar{\bigabs{\ntt(\ctn)/n-\E\nut(\ct)}>3\eps}
\notag\\&
\le
  \P\bigpar{\bigabs{\ntt(\ctn)/n-\nttm(\ctn)/n}>\eps}
+  \P\bigpar{\bigabs{\nttm(\ctn)/n-\E\nutm(\ct)}>\eps}
\notag\\&
\le2\eps.
\end{align}
Hence, \eqref{t1p} holds.

Finally, as said earlier, \eqref{t1p} and \eqref{t1e} are together
equivalent to the $L^1$ convergence \eqref{t1}.
\end{proof}

\section{Examples}\label{Sex}
We give some simple but illuminating examples.
Recall also \refE{Ep}.

\begin{example}\label{Eqr}
  Let $t=t_{q,r}$ consist of two paths with $q+1$ and $r+1$ vertices, joined
  at the 
  root; here $q,r\ge1$. We have $k=1+q+r$ and $d_1=2$ while $d_i=1$ for
  $i>1$; thus $\gdt=2$.
Since $\E\xi=1$, \eqref{l1a} yields
\begin{align}
  \E\nuqr(\ct)=\E\binom{\xi}2=\frac{\E\xi^2-1}{2}=\frac{\gss}2.
\end{align}
Hence, \refT{T1} yields, for any $q,r\ge1$,
\begin{align}\label{eqr2}
  \nnqr(\ctn)/n\lito \gss/2.
\end{align}
\end{example}

\begin{example}\label{E11}
Consider the special case $q=r=1$ of \refE{Eqr}. Then $t_{1,1}$ is a
cherry, \ie, a root with two children.
If a vertex $v$ in a tree $T$ has degree $d(v)$, then the number of cherries
rooted at $v$ 
is $\binom{d(v)}2$, and thus
\begin{align}\label{e11}
  \nnx_{t_{1,1}}(T)=\sum_{v\in T}\binom{d(v)}2
=\sumr \binom r2 X_r(T),
\end{align}
where $X_r(T)$ is the number of vertices of degree $r$ in $T$.

It is known that $X_r(\ctn)/n\pto p_r$, see \eg{} \cite[Theorem 7.11]{SJ264}.
Hence, \eqref{eqr2} (with $q=r=1$)
is what we would get by dividing \eqref{e11} by $n$ and
taking the limit inside the sum; if the degree distribution is bounded, the
sum is finite so this is rigorous and \eqref{eqr2} (still with $q=r=1$)
follows from \eqref{e11}.

In this case we can say much more than \eqref{eqr2}.
 It was proved in \cite{Kolchin},
see also \cite{DrmotaG99},
that $X_r(\ctn)$ is asymptotically
normal, with
\begin{align}
  \frac{X_r(\ctn)-np_r}{\sqrt n}\dto N\bigpar{0,\gam_r^2}
\end{align}
for some explicit $\gam_r^2$.
This was extended to joint convergence for all $r$ in \cite{SJ132}, provided
$\E\xi^3<\infty$. Hence, at least if $\xi$ is bounded, it follows from
\eqref{e11} that
$\nnx_{t_{1,1}}(\ctn)$ is asymptotically
normal, with
\begin{align}\label{asn}
  \frac{\nnx_{t_{1,1}}(\ctn)-n\gss/2}{\sqrt n}\dto N\bigpar{0,\gam^2}
\end{align}
for some explicit $\gam^2\ge0$.
There are degenerate cases where $\gam^2=0$. For example, for full binary
trees ($\P(\xi=2)=\P(\xi=0)=\frac12$), all degrees are 0 or 2, and then
each $X_r(T)$ is a deterministic function of $|T|$; hence \eqref{e11} shows
that $\nnx_{t_{1,1}}(\ctn)$ is deterministic. 
More generally, the same happens for full
$m$-ary trees, with $\xi\in\set{0,m}$ \as, for any $m\ge2$. But 
it can be seen from the covariances given in \cite{SJ132} that $\gam^2>0$ in all
other cases with bounded $\xi$.
See further \refS{Sfurther}.
\end{example}

\begin{example}
  \label{El}
Let $\ell\ge1$, and let $\vvl(T)$ be the number of (undirected) paths of
length $\ell$ in $T$. 
For definiteness, we count undirected paths, so this equals the number of
unordered pairs $(v,w)$ of vertices of distance $\ell$.
There are two cases: 
\begin{romenumerate}
  
\item 
$v$ is an ancestor of $w$, or conversely; the number of such pairs is
$\nnx_{\pl}(T)$.

\item 
Neither $v$ nor $w$ is an ancestor of the other. 
Then $v$ and $w$ are the two leaves in a copy of $\tqr$ 
with $q,r\ge1$ and $q+r=\ell$.
For given $q$ and $r$, the number of such pairs equals $\nnqr(T)$
\end{romenumerate}
Consequently,
\begin{align}
  \vvl(T)=\nux_{\pl}(T)+\sum_{q=1}^{\ell-1}\nnx_{t_{q,\ell-q}}(T).
\end{align}
Hence, \refEs{Ep} and \ref{Eqr} yield
\begin{align}\label{julie}
  \vvl(\ctn)/n\lito 1+(\ell-1)\frac{\gss}2.
\end{align}

For example, taking $\xi\sim\Po(1)$ we obtain (forgetting the ordering)
a uniformly random unordered labelled tree; 
we have $\gss=1$ and thus \eqref{julie} yields 
\begin{align}\label{vvpo}
  \vvl(\ctn)\lito (\ell+1)/2.
\end{align}

Similarly, taking $\xi\sim\Ge(1/2)$ we obtain
a uniformly random ordered  tree; 
we have $\gss=2$ and thus \eqref{julie} then yields 
\begin{align}\label{vvge}
  \vvl(\ctn)\lito \ell.
\end{align}

 Taking $\xi\sim\Bi(2,1/2)$ we obtain
a uniformly random binary tree; 
we have $\gss=1/2$ and thus \eqref{julie} now yields 
\begin{align}\label{vvge}
  \vvl(\ctn)\lito (\ell+3)/4.
\end{align}

\end{example}

The following example shows that the estimate $o\bigpar{n\qq}$ in \refL{Lon}
is best possible.
\begin{example}
  \label{Ebad}
For simplicity,  let the tree $\ttt$ be a star, where the root has degree
$\gD\ge2$ and its children are leaves with degree 0. (The argument is
easily modified to any tree $\ttt$ with $\gdt\ge2$.)
Thus $k:=|t|=\gD+1$.
Assume that the span of $\xi$ is 1.

The local limit theorem \eqref{local} implies that if 
$n$ is large and $m\le n\qq$, 
then  
\begin{align}
\P(S_{n-k}=n-m-1)\ge cn\qqw  
,\end{align}
and thus, using \eqref{sigrid},
\begin{align}
\P(S_{n-k}=n-m-1)/\P(S_n=n-1)\ge c.  
\end{align}
Hence, by  \eqref{l1b} and considering there only terms with $m_2=\dots=m_k=0$,
\begin{align}\label{nutt}
  \E\nut(\ctn)
\ge
c\sum_{\gD<m_1\le n\qq} p_{m_1}\binom{m_1}{\gD} m_1
\ge
c\sum_{\gD<m\le n\qq} p_{m}{m}^{\gD+1}.
\end{align}
If $\eps>0$, and we let $p_m=m^{-\gD-1-\eps}$ for large $m$, then
$\E\xi^\gD<\infty$, and \eqref{nutt} yields, for large $n$,
\begin{align}\label{nutts}
  \E\nut(\ctn)
\ge
c\sum_{\gD<m\le n\qq} {m}^{-\eps}
\ge c n^{(1-\eps)/2}.
\end{align}
Hence,  for any $\eps>0$, $\E\nut (\ctn)$ can grow faster than
$n^{1/2-\eps}$.

Similarly, we can find an offspring distribution $(p_m)\xooo$
satifying the conditions such that $\E\nut(\ctn)=n^{1/2-o(1)}$; we omit the
details. 
Moreover, for any given sequence $\gd(n)\downto0$, we can find  $(p_m)\xooo$
such that 
$\E\nut(\ctn)\ge\gd(n)n\qq$, at least for a subsequence.  To see this, take
an increasing sequence $(m_j)\xoo$ with $\sumj j\gd(m_j^2)<1$.
Let $p_{m_j}:=j\gd(m_j^2)m_j^{-\gD}$, and $p_m=0$ for all other $m\ge2$,
choosing $p_0$ and $p_1$ such that $\sum_i p_i=\sum_ii p_i=1$.
Also, let $n_j:=m_j^2$. Then \eqref{nutt} implies that, for large $j$,  
\begin{align}
  \E\nut(\ct_{n_j}) \ge c p_{m_j}m_j^{\gD+1} 
=cj m_j \gd(m_j^2)
\ge m_j \gd(m_j^2)
= n_j\qq \gd(n_j) .
\end{align}
\end{example}

\section{Asymptotic normality?}\label{Sfurther}

We showed in \refE{E11} that if $\xi$ is bounded, then
$\nnx_{t_{1,1}}(\ctn)$ is asymptotically normal in the sense that
\eqref{asn} holds (although $\gam^2=0$ is possible). 
In fact, this holds for any fixed tree $\ttt$.

\begin{proposition}\label{P5}
  Assume that $\xi$ is bounded. Then, for any fixed tree $\ttt$, 
\begin{align}\label{asn5}
  \frac{\ntt(\ctn)-n\mu_\ttt}{\sqrt n}\dto N\bigpar{0,\gam_\ttt^2},
\end{align}
for $\mu_\ttt:=\E\nut(\ct)$ and some $\gam^2_\ttt\ge0$.
\end{proposition}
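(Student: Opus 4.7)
The plan is to recast $\ntt(\ctn)$ as an additive functional with a \emph{bounded} toll and then invoke the CLT for such functionals proved in \cite{SJ285}.

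From \eqref{ntt} we have $\ntt(T)=\sum_{v\in T}\nut(T^v)$, which exhibits $\ntt$ as an additive functional with toll $f(T):=\nut(T)$. Let $h$ denote the height of $\ttt$, \ie{} the maximum depth of a vertex in $\ttt$. Any rooted copy of $\ttt$ inside $T^v$ is contained within the first $h+1$ levels of $T^v$, so $\nut(T^v)$ depends only on the truncation of $T^v$ to depth $h$. When $\xi\le M$ almost surely, this truncation has at most $1+M+\cdots+M^h$ vertices, and there are only finitely many such ordered shapes; hence $\nut(T^v)\le C=C(\ttt,M)$ uniformly.

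With $f$ bounded and $\E\xi^2<\infty$ (automatic when $\xi$ is bounded), the central limit theorem for bounded additive functionals of a \cGWt{} from \cite{SJ285} applies directly and yields \eqref{asn5} with $\mu_\ttt=\E f(\cT)=\E\nut(\cT)$, matching the LLN in \refT{T1}, and with some $\gam_\ttt^2\ge0$ given explicitly as a sum of covariances of $f$ evaluated on independent fringe subtrees, corrected for the size-conditioning $|\ctn|=n$.

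The point to flag, rather than a genuine obstacle, is that $\gam_\ttt^2=0$ can occur: \refE{E11} already shows that when $\xi\in\set{0,m}$ almost surely for some $m\ge2$, the cherry count $\nnx_{t_{1,1}}(\ctn)$ is a deterministic function of $n$, and analogous degeneracies exist for other $\ttt$. The formula for $\gam_\ttt^2$ from \cite{SJ285} characterizes when this happens, but such a characterization is not required for \refProp{P5}, which merely asserts $\gam_\ttt^2\ge0$.
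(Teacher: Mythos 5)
Your proof is correct and follows essentially the same route as the paper's second (``alternative'') argument: observing that when $\xi$ is bounded the toll $\nut$ is a bounded local functional (depending only on the truncation of $T^v$ to the height of $\ttt$) and invoking the central limit theorem for such additive functionals from \cite{SJ285}. The paper's primary argument instead expresses $\ntt(\ctn)$ as a finite linear combination of the pattern counts of \citet{Drmotaetal}, but it explicitly records your approach as an equally valid alternative.
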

\begin{proof}
  This follows from the result by \citet{Drmotaetal} on patterns discussed
  in \refS{S:intro}  (extended to \cGWt{s} \cite{Drmotaetal,Kok1,Kok2}); 
the assumption on $\xi$ means that vertex degrees are
  bounded by some constant, and thus there is a finite number of patterns
  that correspond to subtrees isomorphic to $\ttt$; hence $\ntt(\ctn)$ is a
  linear combination of pattern counts, and the result follows from the
  joint asymptotic normality of the latter.
(See also \cite{LiLi} for a special case.)

Alternatively, this is an  application of \cite[Theorem 1.13]{SJ285}:
the functional $\nut$ is local (as defined in \cite{SJ285}) and for trees
with degrees bounded by some constant $K$, $\nut$ is bounded. 
Hence
\eqref{asn5} follows from \cite[Theorem 1.13]{SJ285}.
\end{proof}

We conjecture that this behaviour is typical, and that \refProp{P5} 
holds for every $\xi$ with $\E\xi=1$ that satisfies a suitable moment
condition. 
However, it seems that substantial additional work would be required to show
this. As said in the introduction,  this was briefly discussed in
\cite{Drmotaetal}, but it seems that the method there requires extensions to
infinite systems of functional equations.
Similarly, the application of \cite[Theorem 1.13]{SJ285} requires
$\nut(\ctn)$ to be bounded, which is not the case when $\xi$ is unbounded.
It is possible that this may be overcome by truncations and some variance
estimates, 
but again more work is needed.
(The extension in \cite{WagnerRS2020} applies to 
the case when $\ttt$ is a star with root degree $\gD$ (including
\refE{E11} with $\gD=2$) and $\E\xi^{2\gD+1}<\infty$;
this might suggest further extensions.)
This problem is thus left for future research.

Note also that there are degenerate cases when the asymptotic variance
  in \eqref{asn5}  $\gam^2_\ttt=0$; see \refEs{Ep} and \ref{E11}.
(Then \eqref{asn5} does not give
  asymptotic normality; only a concentration result.)
However, we conjecture that this is an exception, occuring only in a few
special cases.

\subsection*{Acknowledgement}
I thank Stephan Wagner for helpful comments.

\newcommand\AAP{\emph{Adv. Appl. Probab.} }
\newcommand\JAP{\emph{J. Appl. Probab.} }
\newcommand\JAMS{\emph{J. \AMS} }
\newcommand\MAMS{\emph{Memoirs \AMS} }
\newcommand\PAMS{\emph{Proc. \AMS} }
\newcommand\TAMS{\emph{Trans. \AMS} }
\newcommand\AnnMS{\emph{Ann. Math. Statist.} }
\newcommand\AnnPr{\emph{Ann. Probab.} }
\newcommand\CPC{\emph{Combin. Probab. Comput.} }
\newcommand\JMAA{\emph{J. Math. Anal. Appl.} }
\newcommand\RSA{\emph{Random Structures Algorithms} }
\newcommand\DMTCS{\jour{Discr. Math. Theor. Comput. Sci.} }

\newcommand\AMS{Amer. Math. Soc.}
\newcommand\Springer{Springer-Verlag}
\newcommand\Wiley{Wiley}

\newcommand\vol{\textbf}
\newcommand\jour{\emph}
\newcommand\book{\emph}
\newcommand\inbook{\emph}
\def\no#1#2,{\unskip#2, no. #1,} 
\newcommand\toappear{\unskip, to appear}

\newcommand\arxiv[1]{\texttt{arXiv}:#1}
\newcommand\arXiv{\arxiv}

\def\nobibitem#1\par{}

\end{document}